\def\ra{{\rightarrow}}
\def\loc{{\rm loc}}
\DeclareMathOperator{\Ker}{Ker}
\theoremstyle{plain}
\newtheorem{thm}{Theorem}[section]
\newtheorem{lemma}[thm]{Lemma}
\newtheorem{ass}[thm]{Assertion}
\begin{document}

\title{There exist no contact Anosov diffeomorphisms}

%
\author[M Asaoka]{Masayuki Asaoka}
\address{Faculty of Science and Engineering, Doshisha University,
 1-3 Tatara Miyakodani, Kyotanabe 610-0394, Japan.}
\email{masaoka@mail.doshisha.ac.jp}
%

%
\author[Y Mitsumatsu]{Yoshihiko Mitsumatsu}
\address{Department of Mathematics, Chuo University, 
1-13-27 Kasuga, Bunkyo-ku, 
Tokyo 112-8551, Japan.}
\email{yoshi@math.chuo-u.ac.jp}
%

\thanks{
The first author is 
supported in part by JSPS KAKENHI Grants 22K03302. 
The second author is 
supported in part by JSPS KAKENHI Grants 
21K18579, 21H00985, 23K20798, and 
by Chuo University Grant for Special Research.}

\subjclass
{37D20, 53D10. 
}
\keywords{Anosov diffeomorphisms, contact structures}


\begin{abstract}
For any Anosov diffeomorphims 
on a closed odd dimensional manifold, 
there exists no invariant 
contact structure. 
\end{abstract}

\maketitle 

\section{Result}
In this short note we prove the following.  
\begin{thm}
\label{thm:main}
 Let $M$ be an odd dimensional closed manifold
 and $f$ a $C^2$ Anosov diffeomorphism on $M$.
Then, $f$ preserves no contact structures on $M$.
\end{thm}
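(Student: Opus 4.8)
The plan is to turn an invariant contact structure into a rigid identity for the Lyapunov exponents of $f$ and then contradict it with the entropy formula. Write $\dim M = 2n+1$ and $\xi = \Ker\alpha$ for a contact form $\alpha$ (pass to the orientation cover if $\xi$ is not coorientable). Invariance of $\xi$ means $f^*\alpha = \lambda\alpha$ for a nowhere vanishing $\lambda$. I would first record the computation $f^*(d\alpha) = d\lambda\wedge\alpha + \lambda\, d\alpha$; since any term carrying the factor $d\lambda\wedge\alpha$ is killed by the leading $\alpha$ (it produces $\alpha\wedge\alpha=0$), one gets $\alpha\wedge(f^*d\alpha)^n = \lambda^n\,\alpha\wedge(d\alpha)^n$, and hence the contact volume $\Omega = \alpha\wedge(d\alpha)^n$ transforms by $f^*\Omega = \lambda^{n+1}\Omega$. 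Therefore, for every ergodic $f$-invariant probability measure $\mu$, the sum of all Lyapunov exponents satisfies
\[
\Sigma(\mu) := \sum_i \chi_i(\mu) = \int \log|\det df|\, d\mu = (n+1)\int \log|\lambda|\, d\mu .
\]

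The second step is to locate $\xi$ relative to the hyperbolic splitting $TM = E^s\oplus E^u$. Because $E^s\oplus E^u$ is a dominated splitting and $\xi$ is a smooth $df$-invariant subbundle, a standard property of dominated splittings gives the compatible decomposition $\xi = (\xi\cap E^s)\oplus(\xi\cap E^u)$. Since $\dim\xi = 2n = (\dim E^s+\dim E^u)-1$, exactly one summand is full, so either $E^s\subset\xi$ or $E^u\subset\xi$. I would treat the case $E^s\subset\xi$, the other being symmetric under replacing $f$ by $f^{-1}$. Then the quotient line bundle $N = TM/\xi$ is identified with a quotient of $E^u$ by an invariant hyperplane subbundle; a quotient-norm estimate shows such a quotient of the uniformly expanding $df|_{E^u}$ is again uniformly expanding. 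As $df$ acts on $N$ by multiplication by $\lambda$, the exponent of $N$ is $\int\log|\lambda|\,d\mu$, which is therefore bounded below by a positive constant. Consequently $\Sigma(\mu) = (n+1)\int\log|\lambda|\,d\mu > 0$ for \emph{every} invariant measure $\mu$ simultaneously.

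The final step extracts the contradiction from the entropy formula. Let $\mu^+$ be the SRB measure of the $C^2$ Anosov diffeomorphism $f$. By Pesin's formula it satisfies the equality $h_{\mu^+}(f) = \Sigma^+(\mu^+)$, the sum of its positive exponents. Applying Ruelle's inequality to $f^{-1}$, whose exponents are the negatives of those of $f$, and using $h_{\mu^+}(f^{-1}) = h_{\mu^+}(f)$, I get $h_{\mu^+}(f) \le -\Sigma^-(\mu^+)$, where $\Sigma^-$ denotes the sum of the negative exponents. Combining the two gives $\Sigma^+(\mu^+) \le -\Sigma^-(\mu^+)$, i.e. $\Sigma(\mu^+) = \Sigma^+(\mu^+)+\Sigma^-(\mu^+) \le 0$, which contradicts $\Sigma(\mu^+) > 0$ from the previous step. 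This completes the argument.

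The hard part, and the step I would scrutinize most, is the second one: showing that the smooth invariant field $\xi$ respects the hyperbolic splitting and that the resulting quotient $N$ is uniformly expanding. This is exactly what upgrades the pointwise exponent relation $\Sigma = (n+1)\int\log|\lambda|\,d\mu$ into the uniform sign statement $\Sigma(\mu)>0$ valid for all invariant measures at once, and in particular for the SRB measure used at the end; without uniformity one only gets $\Sigma(\mu)\ge 0$ from periodic approximation, which is not enough. The remaining ingredients are standard but indispensable: the SRB/Pesin entropy equality for $C^2$ Anosov systems, Ruelle's inequality, and the equality $h_\mu(f^{-1})=h_\mu(f)$.
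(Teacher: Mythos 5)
Your overall strategy is viable in outline, but its pivotal step --- the one you yourself flag as the one to scrutinize --- is asserted rather than proved, and it is not in fact a ``standard property of dominated splittings.'' The claim that a continuous $Df$-invariant subbundle $\zeta$ satisfies $\zeta=(\zeta\cap E^s)\oplus(\zeta\cap E^u)$ is not a pointwise linear-algebra fact: at a given point $x$ the fibre $\zeta(x)$ can a priori be any subspace whose orbit under $Df$ extends continuously over the orbit closure. The compatibility can be established at \emph{recurrent} points (if $f^{n_k}x\to x$, then $Df^{n_k}\zeta(x)=\zeta(f^{n_k}x)\to\zeta(x)$ while domination forces a subspace of dimension $\dim\zeta-\dim(\zeta\cap E^s)$ inside $Df^{n_k}\zeta(x)$ to align with $E^u$, giving $\dim(\zeta\cap E^s)+\dim(\zeta\cap E^u)=\dim\zeta$), hence almost everywhere for every invariant measure, and at nonwandering points by continuity; but it is not available at every point of $M$ (whether every point of an Anosov diffeomorphism is nonwandering is a well-known open problem), and the global dichotomy ``$E^s\subset\xi$ everywhere or $E^u\subset\xi$ everywhere'' that you then invoke needs a separate argument (closedness and invariance of the two sets plus connectedness, or ergodicity of the SRB measure). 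As written, the step is a genuine gap; it is also exactly the difficulty the paper's Assertion 2.3 is designed to get around --- the paper only proves the much weaker statement that the sets $S=\{E^s\subset\xi\}$ and $U=\{E^u\subset\xi\}$ are \emph{nonempty}, and it does so by a global integral argument (the contact volume $\alpha\wedge(d\alpha)^m$ would be uniformly contracted), not by pointwise compatibility.

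Your argument is repairable: the a.e.\ version of the compatibility for an ergodic SRB measure, plus invariance of $S$ and $U$, would give $E^s\subset\xi$ (say) $\mu^+$-a.e., and then your quotient-bundle estimate and the Pesin/Ruelle bookkeeping do yield a contradiction. But note two further points of comparison with the paper. First, once one knows $E^s(p)\subset\xi(p)$ at a \emph{single} point $p$ with $\dim E^s\geq n+1$ (which can be arranged by replacing $f$ by $f^{-1}$), there is a two-line finish that you are missing and that makes the entropy machinery unnecessary: the stable manifold through $p$ is a $C^1$ integral submanifold of $E^s$, so $d\alpha$ vanishes on $E^s(p)$, i.e.\ $E^s(p)$ is an isotropic subspace of the $2n$-dimensional symplectic space $(\xi(p),d\alpha_p)$ of dimension $>n$ --- impossible. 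Second, your Step~1 computation $f^*(\alpha\wedge(d\alpha)^n)=\lambda^{n+1}\alpha\wedge(d\alpha)^n$ is correct and is the same as the paper's, but the paper exploits it in the opposite direction: rather than deducing a Lyapunov-exponent identity, it integrates the identity over $M$ to show that $|\lambda|$ cannot be uniformly small, which is what forces $S\neq\emptyset$. I would encourage you to either prove the a.e.\ splitting carefully (via recurrence or the Oseledets decomposition of invariant subbundles) or to replace Step~2 altogether by a nonemptiness argument of the paper's type.
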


There are many Anosov flows,  instead of considering Anosov diffeomorphisms, 
which preserve contact structures, where the strong stable and unstable leaves are 
Legendrian submanifolds. In dimension 3, 
now we know there are a lot of Anosov  Reeb flows even on hyperbolic 3-manifolds \cite{FH}
and in the 
higher dimensional case, the geodesic flows of negatively curved 
manifolds yield Reeb flows 
which are Anosov.   
As we will see, because of the dimension of the stable or unstable foliations, 
there exist no contact Anosov diffeomorphisms. 

The question asking if there exists such a diffeomorphism 
is also motivated by the following two problems on contact diffeomorphims. 
The first one is the problem of existence of hypersurfaces 
in contact manifolds of dimension 5 or higher 
which are
$C^r$-approximated by convex hypersurfaces. 
For the 3-dimensional case, it is always affirmative 
in any regularity up to $r=\infty$ \cite{G}, 
while in dimension 5 and higher, 
by Honda-Huang \cite{HH} and Eliashberg-Pancholi \cite{EP}, 
any closed hypersurface in 
a contact manifold can be $C^0$-approximated 
by Weinstein convex hypersurfaces.  
If there existed an Anosov contact diffeomorphism $\varphi$, 
its mapping torus $M_\varphi$ 
in the contact manifold 
$(-\varepsilon, \varepsilon)\times M_\varphi$ 
would not be 
smoothly approximated by convex surfaces. 
Chaidez \cite{C} constructed such non-$C^2$-approximable hypersurfaces 
deeply using hyperbolic dynamics. 
 
Another problem is asking 
whether 
a given diffeomorphism 
of an odd dimensional closed manifold 
is approximated by contact diffeomorphisms of some contact structures. 
The result of the present paper implies any Anosov diffeomorphisms 
is not $C^1$-approximable in this sense, 
because the set of Anosov diffeomorphisms 
on a closed manifold is an open set in 
the $C^1$-topology 
in the group of diffeomorphims. 

\section{Proof}
Throughout the paper, let $f$ be a $C^{2}$-Anosov diffeomorphism 
on a closed manifold $M$ of dimension $2m+1$,   
 and $TM=E^s \oplus E^u$ be the Anosov splitting of $f$.
Take $0<\lambda<1$ and a $C^0$ Riemannian metric on $M$ such that
\begin{itemize}
 \item $E^s$ and $E^u$ are orthogonal with respect to the metric,
 and
 \item the norm $\|\cdot\|$ induced by the metric satisfies
 that $\|Df|_{E^s}\|\leq \lambda$ and $\|Df^{-1}|_{E^u}\| \leq \lambda$.
\end{itemize}
Put $m^s=\dim E^s$ and  $m^u=\dim E^u$.

\begin{lemma}
\label{lemma:InvariantDistribution}
Let $\zeta\subset TM$  be a continuous distribution of rank $\ell$ on $M$ 
which is invariant under the action of $f$. 
If at a point 
we have 
$p\in M$ $E^{s}(p)\subset \zeta(p)$ 
$[{\rm resp.} \,\, E^{s}(p)\supset \zeta(p) ]$,  
then, 
at any point $x$ in the stable manifold $W^{s}(p)$ of $p$ 
{\rm (}{\em i.e.}, 
the leaf of the stable foliation through $p${\rm )} 
we also have 
$E^{s}(x)\subset \zeta(x)$ 
$[{\rm resp.} \,\, E^{s}(x)\supset \zeta(x) ]$.  
The similar statement for $\zeta$ and $E^{u}$ is also true. 
\end{lemma}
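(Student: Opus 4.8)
The plan is to transfer the containment from $p$ to $x$ by iterating \emph{forward} and exploiting that points of a common stable manifold are positively asymptotic, while controlling an angle-type defect that, perhaps counter-intuitively, \emph{grows} under forward iteration. I treat the case $E^s(p)\subset\zeta(p)$ in detail; the reversed inclusion and the statements for $E^u$ follow by the analogous argument applied to $f$ and to $f^{-1}$.

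First I would quantify the failure of the inclusion $E^s(y)\subset\zeta(y)$. Writing $N(y)=\zeta(y)^\perp$ for the orthogonal complement and $P(y)$ for the orthogonal projection onto $N(y)$, set
$$\phi(y)=\|P(y)|_{E^s(y)}\|=\sup\{\langle v,\eta\rangle : v\in E^s(y),\ \eta\in N(y),\ \|v\|=\|\eta\|=1\}.$$
Then $\phi$ is continuous (as $E^s$, $\zeta$ and the metric are continuous), $0\le\phi\le1$, and $\phi(y)=0$ if and only if $E^s(y)\subset\zeta(y)$. Since $E^s(p)\subset\zeta(p)$ is $f$-invariant, $\phi(f^np)=0$ for all $n$. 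Because $x\in W^s(p)$ gives $d(f^nx,f^np)\to0$ and $\phi$ is uniformly continuous on the compact manifold $M$, the first, routine, step yields $\phi(f^nx)\to0$ as $n\to+\infty$.

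The decisive step is to show that $\phi$ \emph{expands} along forward orbits. The key observation is that the conormal spaces evolve by the inverse adjoint, $N(fy)=(Df^{*})^{-1}N(y)$. Since $Df$ preserves the orthogonal splitting $E^s\oplus E^u$, so does $Df^{*}$, and the estimates $\|Df|_{E^s}\|\le\lambda$, $\|Df^{-1}|_{E^u}\|\le\lambda$ give $\|(Df^{*}|_{E^s})^{-1}\eta\|\ge\lambda^{-1}\|\eta\|$ for $\eta\in E^s$ and $\|(Df^{*}|_{E^u})^{-1}\eta\|\le\lambda\|\eta\|$ for $\eta\in E^u$. Decomposing a unit conormal vector according to $E^s\oplus E^u$ and using the description of $\phi$ as the largest $E^s$-fraction of a unit conormal, a short computation gives, with $\phi_n=\phi(f^nx)$,
$$\phi_{n+1}^2\ge\frac{\phi_n^2}{\lambda^4+(1-\lambda^4)\phi_n^2},\qquad\text{equivalently}\qquad \frac{1}{\phi_{n+1}^2}-1\le\lambda^4\Big(\frac{1}{\phi_n^2}-1\Big).$$
Hence, if $\phi_0=\phi(x)>0$, then $\phi_n^{-2}-1\le\lambda^{4n}(\phi_0^{-2}-1)\to0$, forcing $\phi_n\to1$. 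This contradicts $\phi_n\to0$ from the previous step, so $\phi(x)=0$; that is, $E^s(x)\subset\zeta(x)$.

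For $E^s(p)\supset\zeta(p)$ one runs the same scheme with the defect $\phi'(y)=\sup\{\langle w,u\rangle: w\in\zeta(y),\ u\in E^u(y),\ \|w\|=\|u\|=1\}$ measuring the failure of $\zeta\subset E^s$; here the forward expansion of $\phi'$ comes directly from $Df$ expanding $E^u$, and the same continuity input gives $\phi'(f^nx)\to0$. The two statements involving $E^u$ and the unstable manifold $W^u$ are obtained by applying the results just proved to $f^{-1}$, whose stable bundle and stable foliation are $E^u$ and $W^u$. I expect the only real difficulty to be the decisive inequality above: one must recognize that the relevant defect is \emph{forward-expanding}—so that the soft continuity estimate along the orbit becomes decisive—and set up the conormal bookkeeping so that the contraction rate $\lambda$ of $Df|_{E^s}$ reappears as the expansion rate $\lambda^{-1}$ of the $E^s$-part of the conormal; everything else is routine.
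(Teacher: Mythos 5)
Your proof is correct, and it reaches the conclusion by a route that is recognizably dual to the paper's rather than identical to it. Both arguments share the same skeleton: a soft step (the set where the inclusion holds is closed and $f$-invariant, the forward orbit of $x$ accumulates on it, so by uniform continuity the inclusion holds ``up to a small angle'' at $f^nx$ for large $n$), followed by a hyperbolicity step that upgrades the approximate inclusion to an exact one at $x$. Where you differ is in the second step. The paper works directly inside $\zeta$ with cone fields: for $v\in E^s(x)$ it produces $Df^{k}v+w\in\zeta(f^{k}x)$ with $\|w\|\leq\|Df^{k}v\|$, pulls back to get $v+Df^{-k}w\in\zeta(x)$ with $\|Df^{-k}w\|\leq\lambda^{2k}\|v\|$, and lets $k\to\infty$, so that closedness of $\zeta(x)$ gives $v\in\zeta(x)$ with no contradiction argument needed. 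You instead pass to the conormal bundle $N=\zeta^{\perp}$, encode the failure of the inclusion in the scalar defect $\phi$, and prove the recursion $\phi_{n+1}^{-2}-1\leq\lambda^{4}\bigl(\phi_n^{-2}-1\bigr)$; I have checked this step: the supremum defining $\phi_n$ is attained on the compact unit sphere of $N(f^nx)$, the extremal conormal pushes forward under $(Df^{*})^{-1}$ (which preserves the orthogonal splitting) with its $E^s$-part expanded by at least $\lambda^{-1}$ and its $E^u$-part contracted by at most $\lambda$, and the stated inequality follows, forcing $\phi_n\to1$ whenever $\phi(x)>0$ and hence a contradiction with $\phi_n\to0$. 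Your version costs more bookkeeping (adjoints, attainment of the supremum, the algebra of the recursion) but buys an explicit expansion rate for the defect and a uniform scheme that treats $E^s\subset\zeta$ and $\zeta\subset E^s$ symmetrically, whereas the paper simply remarks that the second case is ``almost the same but slightly simpler.'' Both proofs are valid; the paper's is shorter and constructive, yours is quantitative.
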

\begin{proof}
We will prove in the case $m^{s}\leq\ell$ because for the other case  
a similar but simpler 
argument will suffice. 
Put
\begin{align*}
 Z=\{x \in M \mid E^s(x) \subset \zeta(x)\}.  
\end{align*}
$Z$ is a closed invariant set.  
For $x \in M$ and $\theta \geq 0$, put
\begin{align*}
C^s(x,\theta)
 & =\{v+w \mid v \in E^s(x), w \in E^u(x), \|w\| \leq \theta\|v\|\},\\
C^u(x,\theta)
 & =\{v+w \mid v \in E^s(x), w \in E^u(x), \|v\| \leq \theta\|w\|\}.
\end{align*}
Remark that $Df(C^u(x,\delta)) \subset C^u(f(x), \lambda^2\delta)$.
For a point $x\in M$,   
$E^s(x) \subset \zeta(x)$  is equivalent to  
for any $v \in E^{s}(x)$ and $\theta>0$
there exists some $v+w \in C^s(x,\theta)\cap \zeta(x)$ 
with $w\in E^u(x)$. 
Since $\zeta$ and $E^s$ are continuous subbundles of $TM$ 
and $Z$ is a compact invariant set, 
for any $\theta>0$ there exists $\delta>0$ such that 
any point $q$ of the $\delta$-neighborhood $W_\delta$ of $Z$ 
and for any $v \in E^{s}(q)$ 
there exists some $v+w \in C^s(q,\theta)\cap \zeta(q)$. 

Now fix any point $x\in W^{s}(p)$ and $v\in E^{s}(x)$. 
As $d(f^{k}(x), f^{k}(p))\ra 0$ when $k\ra \infty$ and $f^{k}(p)\in Z$, 
for $\theta=1$, 
there exists $K$ such that $f^{k}(x)\in W_\delta$ for any $k\geq K$ 
and therefore  we can find 
$Df^{k}(v)+w \in C^s(f^{k}(x),1)\cap \zeta(f^{k}(x))$. 
This implies $v+ Df^{-k}(w) \in C^s(x, \lambda^{-2k})\cap \zeta(x)$. 
As we can take $k$ arbitrarily large, this completes the proof.  
\end{proof}

Remark that in the above argument if $p\in M$ is 
a fixed or periodic point of $f$, the proof becomes 
easier. 
It is known that any codimension one Anosov diffeomorphism  
is transitive and thus the periodic points are dense \cite{N}.   
In that case, the whole proof can be simplified.  
In general, the transitivity is an open problem. 
\bigskip

We start to prove Theorem \ref{thm:main} by contradiction. 
Suppose that the Anosov diffeomorphism $f$ preserves 
 a contact structure $\xi$.  
Including the case where $\xi$ is not coorientable, 
we assume that a global pair of smooth 1-forms 
$\{\alpha, -\alpha\}$ defines $\xi$.  
 Put
\begin{align*}
S=\{p \in M \mid E^{s\,}(p) \subset \xi(p)\}, 
\\
U =\{p \in M \mid E^u(p) \subset \xi(p)\}. 
\end{align*}

\begin{ass}
\label{prop:volume}$S$ and $U$ are not empty. 
\end{ass}
\begin{proof}
As the proof is the same for $S$ and $U$ if we replace $f$ with $f^{-1}$, 
we show $S$ is not empty. By contradiction, assume $S$ is empty, 
namely, $E^s(p) \not\subset \xi(p)$ for any $p \in M$.
Let $\eta(p)$ be the orthogonal complement of $\xi(p) \cap E^s(p)$
 in $E^s(p)$ for any $p \in M$.
Then $\eta$ is a continuous one-dimensional subbundle of $E^s$
 such that $E^s(p)=\eta(p) \oplus (\xi \cap E^s)(p)$
 and $T_p M=\eta(p) \oplus \xi(p)$.
For $n \geq 1$ and $p \in M$,
 there exists
 a linear isomorphism 
 $a_n: \eta(p) \ra \eta(f^n (p))$
 such that the $Df^n|_{E^s(p)}:E^s(p) 
 \ra E^s(f^n (p))$ 
 has the form
\begin{equation*}
\begin{bmatrix} a_n(p) & 0 \\ * & * \end{bmatrix},
\end{equation*}
 with respect to the decomposition $\eta \oplus (\xi \cap E^s)$.
Since $\|Df^n|_{E^s}\| \leq \lambda^n$,
 we have $\|a_n(p)\|\leq \lambda^n$ for any $p \in M$.
Since $\eta$ is transverse to $\xi=\Ker \alpha$ and $M$ is compact,
 there exists $C>1$ such that
 $C^{-1}\|v\| \leq |\alpha(v)| \leq C\|v\|$ for any $v \in \eta$.
Then, for $p \in M$ and $v \in \eta(p)$,
\begin{align*}
 |(f^n)^*\alpha(v)|= |\alpha(Df^n v)|=|\alpha(a_n(p) v)|
 \leq C \|a_n(p)v\| \leq C\lambda^n\|v\|
 \leq C^2\lambda^n |\alpha(v)|.
\end{align*}
In particular, there exists $N \geq 1$ such that
\begin{equation*}
 |(f^N)^*\alpha(v)| \leq \frac{1}{2}|\alpha(v)|  
\end{equation*}
 for any $v \in \eta$.

Since $\xi=\Ker \alpha$ is $Df$-invariant,
there exists a positive smooth function $\lambda_N$ on $M$
such that $\pm (f^N)^*\alpha =\pm\lambda_N \cdot \alpha$.
Then, the above inequality implies that $0 < \lambda_N \leq 1/2$.
Since
\begin{align*}
 (f^N)^*d\alpha
 & = d((f^N)^*\alpha)=\pm d(\lambda_N \cdot \alpha)
 =\pm (d\lambda_N \cdot \alpha +\lambda_N \cdot d\alpha),
\end{align*}
we have
\begin{equation*}
 (f^N)^*(\alpha \wedge (d\alpha)^m)
=\pm \lambda_N^{m+1}\alpha \wedge (d\alpha)^m.
\end{equation*}
By compactness of $M$, the total volume 
 of the 
 measure 
 $|\alpha \wedge (d\alpha)^m|$ is finite and
\begin{equation*}
\int_M |\alpha \wedge (d\alpha)^m|
 =\int_M |(f^N)^*(\alpha \wedge (d\alpha)^m)|
 =\int_M |\lambda_N^{m+1}\alpha \wedge (d\alpha)^m|.
\end{equation*}
This contradicts that $|\lambda_N|\leq 1/2$.

\end{proof}
 
Let us complete the proof of Theorem \ref{thm:main}.  
By replacing $f$ with its inverse if necessary,
we may assume that $m^s\geq m+1$.

We apply Lemma \ref{lemma:InvariantDistribution} 
to the contact structure $\xi$ 
and therefore $S$ plays the 
role of $Z$ in the Lemma.   
From the above Assertion, we can take a point $p \in S$.

By the stable manifold theorem, (cf. e.g., Theorem 6.2 in \cite{S})
the local stable manifold $W^s_{\loc}(p)$
is an embedded $C^r$-disk of dimension $\dim E^s$.
For $v,w \in E^s(p)$, there exist $C^r$-vector fields $X,Y$ on $M$
 such that $X(p)=v$, $Y(p)=w$, and
 $X(q),Y(q) \in T_q W^s_{\loc}(p)=E^s(q)$
 for any $q \in W^s_{\loc}(p)$.
Then, we have $[X,Y](p) \in T_p W^s_{\loc}(p)=E^s(p)$.
Since $E^s(p) \subset \xi(p)=\Ker \alpha_p$, we obtain
\begin{align*}
 d\alpha_p(v,w)=X(\alpha(Y))(p)-Y(\alpha(X))(p)-\alpha([X,Y])(p)=0.
\end{align*}
In particular, the two-form $d\alpha$ vanishes on $E^s(p)$.
It contradicts that $d\alpha$ is non-degenerate on $\xi(p)$
 since $\dim \xi=2m$ and $\dim E^s \geq m+1$.  
 This completes the proof of Theorem \ref{thm:main}. 
 \hfill $\square$

\end{document}